\mathchardef\mhyphen="2D
\newtheorem{theorem}{Theorem}
\newtheorem{lemma}[theorem]{Lemma}
\newtheorem{conjecture}[theorem]{Conjecture}
\theoremstyle{definition}
\newtheorem{definition}{Definition}
\newtheorem{strategy}{Strategy}
\title{Counting Counterfeit Coins: A New Coin Weighing Problem}
\author{Nicholas Diaco}
\date{}
\begin{document}
\maketitle

\begin{abstract}
\normalsize

In 2007, a new variety of the well-known problem of identifying a counterfeit coin using a balance scale was introduced in the sixth International Kolmogorov Math Tournament. This paper offers a comprehensive overview of this new problem by presenting it in the context of the traditional coin weighing puzzle and then explaining what makes the new problem mathematically unique. Two weighing strategies described previously are used to derive lower bounds for the optimal number of admissible situations for given parameters. Additionally, a new weighing procedure is described that can be adapted to provide a solution for a broad spectrum of initial parameters by representing the number of counterfeit coins as a linear combination of positive integers. In closing, we offer a new form of the traditional counterfeit coin problem and provide a lower bound for the number of weighings necessary to solve it.

\end{abstract}

\pagebreak 

\section{Introduction}\label{int}

The problem of identifying a single counterfeit coin in a set of ordinary coins using the fewest possible number of measurements on a balance beam is often thought to be folklore. It turns out that the original problem is fairly modern, with the first records of its existence dating back to around 1945 when Grossman posed the following question in \cite{1945}:

\begin{quote}
Given 12 coins, at least 11 of which have the same weight, how can one be guaranteed after three measurements on a balance scale to either isolate the defective coin and find its weight relative to the other coins or prove its nonexistance?
\end{quote}

This problem became an instant classic in the mathematical world. Numerous generalizations of the ``one counterfeit coin problem'' exist, with some versions of the problem including more real coins, adaptive and non-adaptive weighing schemes, and balance beams with more than two pans, just to name a few. A fairly thorough overview of these recent additions and their solutions is offered in \cite{2pan1,2pan2,npan}. As it turns out, the problem of finding counterfeit coins turns out to be more than just a simple puzzle. It is perhaps no coincidence that Claude Shannon introduced information theory, one of the last century's most significant contributions to modern society, in his renowned paper \cite{shannon} just three years after the introduction of the coin weighing problem --- solutions to the balance puzzle are intimately connected with the construction of information theoretic error correcting codes. Among many other applications, weighing coins can even make finding carriers of blood-borne diseases easier; it is shown in \cite{blood} that a pooling method adapted from the counterfeit coin problem can find infected individuals more efficiently than traditional, one-person-at-a-time testing can.

Naturally, the next step forward seemed to be considering the problem of finding multiple fake coins. What might have been a simple progression turned out to be a huge challenge, however; mathematicians struggled with the addition of just one more fake coin, let alone several. One notable theorem of Pyber in \cite{pyber} in 1986 was as follows: If exactly $m$ (lighter) counterfeit coins are to be found among $n$ coins then the counterfeit coins can be found in at most $$ \log_3 \dbinom{n}{m}+15m $$ steps in all cases. This bound is slightly improved in \cite{pyberrekt} and an upper bound for the problem in which the number of fake coins is unknown is shown in \cite{pyberrekt2}, but further improvement has been hard to come by. New attempts at finding better bounds for various counterfeit coin problems have pulled ingenious techniques from graph theory \cite{purdy1}, sequential algorithms \cite{4fake}, brute force dynamic programming \cite{analytic}, and even geometry \cite{geometry}, yet very few of these tactics have offered much new insight on the general many-fake-coin problem. For example, whether or not it is possible to achieve the information theoretic lower bound for locating any given number of fake coins is still an open question with no end in sight in the near future. A brief summary of the counterfeit coin problem can be found in \cite{GN} along with many related open problems that still have not been solved today, twenty years after the review's publication.

In 2007, an unusual coin weighing problem was suggested by Alexander Shapovalov for the sixth International Kolmogorov Math Tournament \cite{TurKolm}. It was unlike any of the aforementioned coin weighing problems --- rather, it seemed like a converse of the traditional question:

\begin{quote}
You have 80 coins that are identical in appearance. Among them are three fake coins. The genuine coins all have the same weight and the fake coins all have the same weight, but the fake coins are lighter than the real ones. In addition, you know the location of each of the fake coins.

Your friend knows that there are either three or two fake coins in the pile. Without revealing the identity of any of the 80 coins, how can you use a beam balance to convince your friend that there are exactly three fake coins?
\end{quote}

Konstantin Knop offered several interesting solutions to this problem in the case of 100 total coins in \cite{Knop} (in Russian). One of Knop's solutions solves the problem in a mere three steps; for comparison to the traditional problem, Pyber's upper bound from \cite{pyber} guarantees that one would be able to find the three counterfeit coins in at most 56 steps. Tanya Khovanova later published a short blog post \cite{TK} on this puzzle, extending the problem to have another goal: minimizing the amount of information revealed during the weighing process. Even though the identity of no particular coin is revealed, some information about the distribution of the weights is inherently forgone during weighings. To this end, \cite{TK} introduced a revealing coefficient, $R$, as a metric to compare the relative efficiency of different strategies --- the lower the value of the revealing coefficient, the lower the information loss, and the better the strategy. A paper coauthored by Khovanova and Diaco, the author, explores the problem in greater detail \cite{NDTK}. Some advances include analysis and generalization of several strategies, two proofs of optimality for select cases, and introduction of the revealing factor $X$, an alternative to the revealing coefficient, for comparing weighing schemes. Additionally, a distinction is made between two broad classes of strategies: discreet and indiscreet.

In general, the problem can be stated as follows:

\begin{quote}
You have $t$ \textbf{t}otal coins that are identical in appearance. Among them are $f$ \textbf{f}ake coins. The genuine coins all have the same weight and the fake coins all have the same weight, but the fake coins are lighter than the real ones. In addition, you know the location of each of the fake coins.

Your friend knows that there are either $f$ or $d$ (the number we are trying to \textbf{d}isprove) fake coins in the pile. Without revealing the identity of any of the $t$ coins, how can you use a beam balance to convince your friend that there are exactly $f$ fake coins?
\end{quote}

This paper effectively serves as a continuation of \cite{NDTK}. After introducing several preliminary definitions and solutions to the original problem, we will provide a more formal mathematical definition of our new coin weighing problem, and use it to reinterpret both the revealing factor and what it means to have a solution for a given set of parameters $t$, $f$, and $d$. We will slightly improve upon two previously generalized strategies for solving the problem and derive lower bounds on remaining information that any optimal weighing strategy must satisfy. A new discreet strategy is described that can represent the number of fake coins as a linear combination of positive integers, offering a marked improvement over several previously described strategies by requiring fewer weighings, revealing less information, and generating solutions for a much broader range of parameters.

We will list a few conjectures and areas of future study and make suggestions for how they should be approached. We will also briefly highlight several possible applications of this problem in areas such as cryptology and information science. In conclusion, we will create a new version of the traditional coin weighing problem motivated by Shapovalov's puzzle and its solutions, using a result from \cite{purdy1} to derive a lower bound for the number of weighings necessary to solve it.

\section{Preliminary Definitions and Results}\label{sec:1}

The following is the official solution offered by the Kolmogorov Math Tournament \cite{TurKolm}:

\begin{strategy}\label{str:1} Divide all the coins into 5 piles: $A$ and $B$ with 10 coins each, and $C$, $D$, and $E$ with 20 coins each. Place one fake coin in each of $A$, $D$, and $E$. For the first weighing, compare $A + C$ against $B + D$ on the scale. Next, place $A + B$ and $E$ on opposite sides of the scale. Finally, compare $C + D$ against $A + B + E$. The scale will be balanced during the first two weighings, and the pan containing $C$ and $D$ will tilt downwards during the final weighing.
\end{strategy}

This strategy proves to an observer that it is impossible for there to be two fake coins. Furthermore, these weighings show that there are two possibilities: either piles A, D and E each contain one fake coin, or piles B, C, and E each contain one fake coin. In either case, no coin has its identity revealed. The total number of different ways in which the $f$ fake coins can be distributed is easily calculated as $|A|\cdot |D| \cdot |E| + |B| \cdot |C| \cdot |E| = 10\cdot20\cdot 20 + 10\cdot20\cdot 20 = 8000$. Before the weighings take place, however, the $f$ fake coins can be distributed in a total of $\binom{80}{3} = 82160$ equally probable, yet unique ways in the eyes of the observer. Clearly, the number of possible distributions is reduced significantly.

As defined previously in \cite{NDTK}, we would like to introduce the notion of a revealing factor to quantify this observation. If the observer knows that there are exactly $f$ fake coins before the weighings take place, then any one of $\binom{t}{f}$ possible arrangements of these $f$ coins is possible; this value will be referred to as \textit{\# old possibilities}. The number of possible distributions of the $f$ coins after the weighings is denoted by \textit{\# new possibilities}.

\begin{definition}
The \textit{revealing factor} of some successful strategy, denoted by $X$, is defined as

$$X = \frac{\text{\# old possibilities}}{\text{\# new possibilities}}.$$
\end{definition}

A lower value of $X$, which correlates to a larger number of new possibilities, is clearly preferable. For Strategy~\ref{str:1}, we have $X=10.27$. As suggested by Khovanova in \cite{TK} but not used in this paper, another means of quantifying the amount of information lost to an observer during weighings is the revealing coefficient $R$, which can be calculated as $1-1/X$. Strategy~\ref{str:1} is revisited in section~\ref{gensha}.

\begin{strategy}\label{str:2} Split all the coins into three piles of size 26 and one smaller pile of size 2, with one fake coin in each of the larger piles. First, show that the three groups of 26 coins balance each other on the scale. Next, compare one of the real coins in the smaller pile to a single real coin from one of the larger piles.
\end{strategy}

Once again, an observer of this strategy should be convinced that there are three fake coins as opposed to two. However, three coins were shown to be real in the process, breaking the rules of Shapovalov's original problem. Despite this fact, we have $ \text{ \# new possibilities} = 26\cdot 26 \cdot 25 = 16900 $, and $X\approx 4.86$. This strategy, perhaps unexpectedly, does a much better job at keeping the fake coins hidden than does Strategy~\ref{str:1}. In order to allow strategies such as this one to be accepted, we introduced two different classes of solutions in \cite{NDTK}:

\begin{definition}
A set of weighings or a strategy for which the identity of no particular coin is revealed is a \textit{discreet} strategy. Otherwise, we call the strategy \textit{indiscreet}.
\end{definition}

As proven in \cite{NDTK}, constructing a discreet weighing strategy is not possible in all cases:

\begin{lemma}
For a strategy to be discreet, it is necessary that $1 < f < t-1$.
\end{lemma}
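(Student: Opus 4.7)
The plan is to prove the contrapositive: if $f \in \{0, 1, t-1, t\}$, then any successful strategy must be indiscreet. The extremes $f = 0$ and $f = t$ are immediate, since a successful strategy convinces the observer that every coin is genuine (respectively fake), thereby revealing every identity.

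For $f = 1$, each admissible arrangement of fakes is a singleton $\{c\}$. Discretion requires, for every coin $c$, both that $\{c\}$ remains admissible after the weighings (otherwise $c$ is exposed as genuine) and that some other singleton survives (otherwise $c$ is exposed as fake). The first requirement alone forces all $t$ singletons to remain. But for any weighing with equal-sized pans $L$ and $R$, placing the single fake in $L$, in $R$, or in $O := \{1,\dots,t\}\setminus(L\cup R)$ produces the three distinct outcomes ``$L$ rises,'' ``$R$ rises,'' and ``balanced.'' The only way all $t$ singleton arrangements can yield the common observed outcome sequence is for every weighing to have $L = R = \emptyset$, i.e., the entire protocol is a sequence of vacuously balanced weighings. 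Such a sequence is, however, also consistent with every arrangement of $d$ fakes (simply placed off the empty scales), so the strategy cannot rule out the alternative $d$. This contradicts success, and therefore no discreet strategy exists when $f=1$.

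The case $f = t - 1$ follows by duality. The admissible arrangements are now complements of singletons, parameterized by the location $c$ of the unique genuine coin, and discretion again forces all $t$ of them to survive. For any equal-pan weighing, a direct calculation shows the outcome is controlled by whether $c$ lies in $L$, $R$, or $O$ (the pan containing $c$ is the heavier one, the other pan rising), giving the same tripartite distinguishability and hence the same conclusion: every weighing must be trivial, and the protocol cannot rule out $d$.

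The principal obstacle I anticipate is justifying the restriction to equal-pan weighings. Unequal pans produce outcomes that depend on the unknown weight difference $\delta$ and on the pan sizes, so their interpretation is model-dependent; I would need to verify that such weighings are either uninformative (e.g.\ in the small-$\delta$ regime, where the outcome is fixed by pan sizes alone) or reducible to the equal-pan analysis above. Once this model-clarification is pinned down, the three-outcome counting argument is the combinatorial heart of the proof and carries the entire lemma.
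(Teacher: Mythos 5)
Your argument is correct, but note that the paper itself offers no proof of this lemma --- it is quoted from the companion paper \cite{NDTK} --- so there is nothing internal to compare against; your write-up is effectively a self-contained replacement. The structure is the right one: the cases $f\in\{0,t\}$ are immediate from Definition~\ref{def:4} (with only one admissible situation left, no coin is ever both possibly fake and possibly real), and for $f=1$ the observation that discretion forces every singleton to survive, while any weighing with nonempty pans separates the singletons into at least two distinct syndrome classes, is exactly the combinatorial heart of the matter; the $f=t-1$ case is its clean dual via the location of the unique genuine coin. Two small points. First, the obstacle you flag about unequal pans does not arise in this paper's model: a weighing is required to satisfy $\mathbf{h}\cdot\mathbf{1}=0$, i.e., both pans always hold the same number of coins, so your three-outcome analysis applies to every legal weighing without further justification. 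Second, the model also requires $\mathbf{h}$ to be a nonzero vector, so the ``all weighings are vacuous'' endpoint of your contrapositive is not merely unable to refute $d$ --- it is not a legal strategy at all, which closes the argument slightly more directly than the route through the success condition.
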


On the other hand, it turns out that indiscreet strategies are quite often (and quite counterintuitively) significantly less revealing than their discreet counterparts. This strategy is revisited and generalized in section~\ref{indiscreet}.

\begin{strategy}\label{str:3} Divide all the coins into nine piles: $A_1$, $B_1$, $C_1$, $A_2$, $B_2$, $C_2$, $A_3$, $B_3$ and $C_3$ of sizes 24, 1, 2, 24, 1, 2, 23, 2, and 1, respectively. Demonstrate that $A_1+B_1$,  $A_2+B_2$ and $A_3+B_3$ all have the same weight. Additionally, show that $B_1+C_1$, $B_2+C_2$, and $B_3+C_3$ all balance each other on the scale.
\end{strategy}

An observer can conclude that only the following distributions of fake coins are possible:

\begin{enumerate}
\item one fake coin in one of each: $A_1$, $A_2$, $A_3$ (sizes 24, 24, 23).
\item one fake coin in one of each: $B_1$, $B_2$, $B_3$ (sizes 1, 1, 2).
\item one fake coin in one of each: $C_1$, $C_2$, $C_3$ (sizes 2, 2, 1).
\end{enumerate}

In each case we have ruled out the possibility of there being two fake coins, and the strategy is discreet. The number of ways for the $f$ fake coins to be distributed after the weighings is $\text{ \# new possibilities} = 24\cdot 24 \cdot 23 + 1\cdot 1 \cdot 2 + 2 \cdot 2 \cdot 1 = 13254$, so that $X \approx 6.20$. This strategy is discussed in \cite{Knop,NDTK} and is revisited and generalized in section~\ref{discreet}.

The following relatively trivial strategy does not apply to Shapovalov's original puzzle, but is of great importance in general. If some integer $a>1$ divides both $f$ and $t$ but not $d$, then the following strategy is discreet:

\begin{strategy}\label{str:6}
Divide all the coins into $a$ piles, each with an equal number of fake coins. Proceed by comparing all of these piles with each other on the scale.
\end{strategy}

Since all of the weighings will be balanced, this strategy simply proves that the number of fake coins is divisible by $a$. In general, the smaller the value of $a$, the lower the revealing factor, which can be calculated exactly as
\begin{equation}\label{xinf}
X =\frac{\binom{t}{f}}{\binom{t/a}{f/a}^a} \sim \dfrac{f^{f}}{f!} \bigg( \dfrac{(\frac{f}{a})!}{(\frac{f}{a})^{(\frac{f}{a})}} \bigg) ^a,
\end{equation}
where the right hand side is the value that $X$ approaches as $t$ tends to infinity. It was shown in \cite{NDTK} that this strategy is optimal for $f=2$, $t$ even, and $d$ odd, with the choice of $a=2$.

Several of these strategies for the case of $t=100$, $f=3$, and $d=2$, in addition to more examples of both insufficient and correct solutions to the original problem, can be found in Knop's article \cite{Knop} (in Russian).

\subsection{The Generalized Original Counterfeit Coin Problem}\label{genorig}

We will now offer a technical overview of the traditional counterfeit coin problem. The definitions, notations, and conventions used in the remainder of Section 2.1 are taken from \cite{general} and adapted for the purpose of this paper:

Assume that we have $t$ total objects, at most $f$ of which are defective. Our goal is to create a weighing strategy that exactly locates the $f$ defective objects in $m$ weighings on a balance scale. Let $\mathbb{R}^t$ be the $t$-dimensional Euclidean space, $\mathbold{a} \cdot \mathbold{b} $ be the inner product of vectors $\mathbold{a}$ and $\mathbold{b}$ from $\mathbb{R}^t$, and let $\mathbf{1} = (1,1, \ldots ,1)$ be the vector of length $t$ with $1$ as every element. The cardinality of a set $E \subseteq \mathbb{R}^t$ is denoted by $|E|$. We denote the set of all sequences of length $t$ over the alphabet $I = \{ -1, 0, 1 \}$ by $I^t$; equivalently, $I^t = \{-1, 0, 1\}^t$.

We are given $t$ objects which are each described by one of two positive weights $w_0$ or $w_1$, such that $w_0<w_1$. We will let the standard weight be $w_1$, and consider $w_0$ to be non-standard, or counterfeit, weight; the actual numerical values of $w_0$ and $w_1$ do not matter as shown in \cite{purdy1}. We can then describe the set of $t$ objects by a vector $\mathbf{x} = (x_1, \ldots, x_t) \in \{0,1 \}^t$, where $x_i = 1$ if the weight of the object corresponding to $x_i$ is $w_1$, and $x_i=0$ if the object corresponding to $x_i$ has weight $w_0$.

For every \textit{weighing}, each object is assigned a value of $h_i \in I$. This value gives the object's location in a particular weighing: if $h_i=0$, the object does not participate in the weighing; if $h_i=-1$ the object is located on the left pan, and similarly if $h_i=1$ the object is located on the right pan. In this way, a weighing can be given by a non-zero vector $\mathbf{h} = (h_1,\ldots, h_t) \in I^t$. Both pans must have the same number of coins for each weighing; \textit{i.e.}, $\mathbf{h} \cdot \mathbf{1} = 0$. The result of a weighing is determined by the value $s(\mathbf{x},\mathbf{h}) = \textrm{sign}(\mathbf{x} \cdot \mathbf{h})$: the weighing is balanced for $s(\mathbf{x},\mathbf{h}) = 0$, the right pan outweighs the left when $s(\mathbf{x},\mathbf{h}) = 1$, and the left pan outweighs the right when $s(\mathbf{x},\mathbf{h}) = -1$. 

We will denote the initial set of admissible distributions of weights of objects, or the \textit{set of admissible situations}, by $Z \subseteq I^t$. Furthermore, each \textit{admissible situation} is given by an element $\mathbf{z} \in Z$. An admissible situation is one of potentially many ways for the non-standard objects to be distributed amongst the $t$ total objects; in other words, each $\mathbf{z}$ represents a possibility for the true distribution of the objects, $\mathbf{x}$. This notion of admissible situations offers a quantitative means of describing the lack of initial information about the objects' weights.

Following some weighing $\mathbf{h}$, the set $I^t$ is partitioned by the plane $[\mathbf{x},\mathbf{h}] = 0$ into three disjoint sets $W(s|I^t,\mathbf{h}) = \{ x \in I^t | s(\mathbf{x},\mathbf{h}) =s \}$, $s \in I$. This additionally corresponds to a partition of set $Z$ into the disjoint sets $W(s|Z,\mathbf{h}) = W(s|I^t,\mathbf{h}) \cap Z$, $s \in I$. We say that a weighing $\mathbf{h}$ \textit{classifies} the elements $\mathbf{z} \in Z$ according to the subsets of $Z = W(0|Z,\mathbf{h})+ W(1|Z,\mathbf{h})+ W(-1|Z,\mathbf{h})$.

\begin{definition}
\textit{A weighing strategy $\mathcal{A}$ of length m}, where $m$ denotes the number of weighings, is a sequence of consecutive weighings that follows a predetermined set of instructions. A WS $\mathcal{A}$ checks a situation $\mathbf{z} \in Z$ by performing these weighings on the set of $t$ objects.
\end{definition}

The \textit{procedure of the WS $\mathcal{A}$ in the situation $\mathbf{z}$} is denoted by the sequence of weighings $\mathcal{A}(\mathbf{z}) = \langle \mathbf{h}^1, \ldots, \mathbf{h}^m \rangle$, where $\mathbf{h}^j \in I^t$, is the $j$th weighing for $1 \le j \le m$. Each weighing $\mathbf{h}^j$ may be dependent on the result of the previous weighing, $s(\mathbf{z},\mathbf{h}^{j-1})$, or may simply be determined before any of the weighings take place (adaptive and oblivious weighings, respectively). For all such $\mathcal{A}$, an initial weighing $\mathbf{h}^1$ is given. In addition, the sequence $\mathbf{s}(\mathbf{z}|\mathcal{A}) = (s(\mathbf{z},\mathbf{h}^1), \ldots, s(\mathbf{z},\mathbf{h}^m))$ of results found by $\mathcal{A}(\mathbf{z})$ is called the \textit{syndrome of a situation $\mathbf{z}$}. Conversely, a sequence $(s_1, \ldots , s_m) \in I^m$ which is the syndrome of some situation $\mathbf{z} \in Z$ is called a $(Z, \mathcal{A})$\textit{-syndrome}.

We define $W(\mathbf{s}|\mathcal{A}) \subseteq I^t$ as $W(\mathbf{s}|\mathcal{A}) = \{ \mathbf{z} \in I^t | \mathbf{s}(\mathbf{z}|\mathcal{A}) = \mathbf{s} \}$; $W(\mathbf{s}|Z, \mathcal{A}) = W( \mathbf{s} | \mathcal{A} ) \cap Z$, or the set of all situations with the same syndrome $\mathbf{s}$. Additionally, we denote the set of all $(Z, \mathcal{A})$-syndromes as $S(Z, \mathcal{A})$. For two distinct syndromes $\mathbf{s}^1 \not= \mathbf{s}^2$, we have $W(\mathbf{s}^1|\mathcal{A}) \cap W(\mathbf{s}^2|\mathcal{A}) = \varnothing$ by definition; this means that the set $S(Z,\mathcal{A})$ induces a partition of $Z$ into the subsets $W(\mathbf{s}|\mathcal{A}) \subseteq Z$, $\mathbf{s} \in S(Z, \mathcal{A})$. Consequently, we can say that the strategy $\mathcal{A}$ classifies the situations from $Z$ into subsets $W(\mathbf{s} | \mathcal{A}) \in S(Z,\mathcal{A})$.

\begin{definition} 
A WS $\mathcal{A}$ is said to \textit{identify the situations in a set $Z$} if the condition $|W(\mathbf{s}|Z,\mathcal{A})| = 1$ is satisfied for all $\mathbf{s} \in S(Z, \mathcal{A})$.
\end{definition}

The above definition states that when a weighing strategy $\mathcal{A}$ generates a $(Z,\mathcal{A})$-syndrome that corresponds with exactly one $\mathbf{z} \in Z$, it identifies the situation in a set $Z$ and thus solves the problem of exactly locating the non-standard objects. Alternatively, we can say that $\mathcal{A}$ identifies the situations in a set $Z$ if it has found $\mathbf{z} \in Z$ representing the true weight distribution $\mathbf{x}$ of the $t$ objects.

\section{Generalizing Shapovalov's Counterfeit Coin Problem}\label{gensha}

Recall the explanation of Shapovalov's problem given in Section \ref{int}. We have $t$ total coins and $f$ fake coins, and our goal is to prove that there are exactly $f$ --- not $d$ --- fake coins, without revealing any of the coins' identities. In this section it is assumed that all calculations are for the party observing the weighings take place rather than for the one performing them, since for the individual carrying out the weighings it is clear that $|Z|=1$. We define $Z_f$ as the set of initial admissible situations in which there are exactly $f$ fake coins, and $Z_d$ is the set of admissible situations in which there are exactly $d$ fake coins. It is easy to see that $|Z_n|= \dbinom{t}{n}$ for any $0 \le n \le t$.

\begin{definition}\label{def:3}
A WS $\mathcal{A}$ is said to be a \textit{successful strategy}, or prove that there are $f$ and not $d$ fake coins, if the following conditions are met:

\begin{onehalfspacing}

\begin{enumerate}
\item $|W(\mathbf{s}|Z_d,\mathcal{A})| = 0$ is satisfied for $\mathbf{s} \in S(Z_d, \mathcal{A})$.
\item $|W(\mathbf{s}|Z_f,\mathcal{A})| > 0$ is satisfied for $\mathbf{s} \in S(Z_f, \mathcal{A})$.
\end{enumerate}

\end{onehalfspacing}

\end{definition}

\begin{definition}\label{def:4}
A successful strategy given by a WS $\mathcal{A}$ is said to be \textit{discreet} if the $i$th element $z_i$ of all admissible situations $ \mathbf{z} \in W(\mathbf{s}|Z_f, \mathcal{A})$ is equal to $0$ and $1$ at least one time each for $1 \le i \le t$; in this way, it is not possible to claim that any specific coin is certainly counterfeit or real. A successful strategy that does not meet this requirement is said to be \textit{indiscreet}.
\end{definition}

\begin{theorem}\label{bounds}
For any successful discreet weighing strategy $\mathcal{A}$, 
\begin{equation}\label{eq:bound}
\max \Bigl( \Bigl\lceil \frac{t}{f} \Bigr\rceil, \Bigl\lceil \frac{t}{t-f} \Bigr\rceil \Bigr) \le |W(\mathbf{s}|Z_f,\mathcal{A})| < \dbinom{t}{f}.
\end{equation}

\end{theorem}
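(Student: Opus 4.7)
The plan is to prove the two inequalities independently: the lower bound comes from a double-counting argument that exploits the discreet hypothesis, while the strict upper bound follows from showing that the very first weighing of $\mathcal{A}$ must already split $Z_f$ non-trivially.

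For the lower bound I would fix a syndrome $\mathbf{s} \in S(Z_f, \mathcal{A})$ and set $W = W(\mathbf{s}|Z_f, \mathcal{A})$. Counting pairs $(\mathbf{z}, i)$ with $\mathbf{z} \in W$ and $z_i = 0$ gives exactly $f\,|W|$, since each $\mathbf{z} \in W$ has precisely $f$ zero coordinates (one per fake coin). Counting the same pairs by coordinate instead: Definition~\ref{def:4} guarantees that for each $i$ there is at least one $\mathbf{z} \in W$ with $z_i = 0$, so the count is at least $t$. Hence $f\,|W| \ge t$, giving $|W| \ge \lceil t/f \rceil$. The symmetric count of pairs with $z_i = 1$ (using that each $\mathbf{z}$ has $t-f$ ones, and that by discreetness every position sees a $1$ somewhere in $W$) yields $(t-f)|W| \ge t$, hence $|W| \ge \lceil t/(t-f) \rceil$.

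For the upper bound, first note that a successful $\mathcal{A}$ must perform at least one weighing: otherwise every situation in $Z_f$ and every situation in $Z_d$ would share the empty syndrome, violating condition (1) of Definition~\ref{def:3}. Let $\mathbf{h}^1$ be the (predetermined) initial weighing of $\mathcal{A}$. Every $W(\mathbf{s}|Z_f,\mathcal{A})$ lies inside a single sign class of $Z_f$ under $\mathbf{h}^1$, so it suffices to show that $\mathbf{h}^1$ produces at least two distinct signs as $\mathbf{x}$ ranges over $Z_f$. Writing $\mathbf{x} \cdot \mathbf{h}^1 = -\sum_{i \in F} h^1_i$ where $F$ is the set of fake positions, the average of this inner product over all $f$-subsets $F$ equals $-\tfrac{f}{t}(\mathbf{h}^1 \cdot \mathbf{1}) = 0$. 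A constant nonzero sign over $Z_f$ is therefore impossible, as it would force the average to be strictly positive or strictly negative. A constantly zero sign would force $h^1_a = h^1_b$ for any two positions $a, b$ via a swap argument (comparing $F$ and $F'$ differing only by exchanging $a$ and $b$, which exists whenever $1 \le f \le t-1$), collapsing $\mathbf{h}^1$ to the zero vector and contradicting that $\mathbf{h}^1$ is a weighing. Hence $|W(\mathbf{s}|Z_f,\mathcal{A})|$ is strictly less than $|Z_f| = \binom{t}{f}$.

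The trickiest step is the averaging-plus-swap argument for the upper bound; the rest is bookkeeping. The swap step relies on being able to realize any pair of positions as the only difference between two situations in $Z_f$, which is exactly guaranteed by the range $1 \le f \le t-1$ that the preliminary lemma already forces on any discreet strategy.
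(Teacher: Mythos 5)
Your proof is correct. For the lower bound you do exactly what the paper does --- a covering argument on the coordinates forced by Definition~\ref{def:4} --- just phrased more explicitly as a double count of pairs $(\mathbf{z},i)$ with $z_i=0$ (respectively $z_i=1$), which cleanly yields $f\,|W|\ge t$ and $(t-f)\,|W|\ge t$. Where you genuinely diverge is the strict upper bound: the paper simply defers to \cite{NDTK} for the fact that a successful strategy necessarily shrinks $Z_f$, whereas you give a self-contained argument that the first weighing $\mathbf{h}^1$ must split $Z_f$ into at least two non-empty sign classes, via the observation that $\mathbf{x}\cdot\mathbf{h}^1=-\sum_{i\in F}h^1_i$ averages to zero over all $f$-subsets $F$ (using $\mathbf{h}^1\cdot\mathbf{1}=0$), ruling out a constant nonzero sign, together with a swap argument ruling out a constant zero sign (which would force $\mathbf{h}^1$ to be a constant, hence zero, vector). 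The swap requires $1\le f\le t-1$, which the discreetness lemma already guarantees. Your version buys a proof that does not lean on an external reference and in fact establishes something slightly stronger --- that every single weighing of a successful strategy already partitions $Z_f$ non-trivially --- at the cost of a little extra machinery; the paper's version is shorter but not self-contained.
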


\begin{proof}
LHS: As shown in definition~\ref{def:4}, each element $z_i$ corresponding to a unique coin for all $\mathbf{z} \in W(\mathbf{s}|Z_f, \mathcal{A})$ must be $0$ at least once, and $1$ at least once for a successful strategy $\mathcal{A}$ to be discreet. In order for each element $z_i$ to assume the value $0$ once, a minimum of $\lceil t/f \rceil$ unique $\mathbf{z} \in W(\mathbf{s}|Z_f, \mathcal{A})$ is necessary. By symmetry, there must be at the very least $\lceil t/(t-f) \rceil$ unique admissible situations $\mathbf{z}$ for each element $z_i$ to assume the value $1$ once.

RHS: If it were possible to prove to an observer that there are exactly $f$ counterfeit coins without revealing any additional information, there would be $|Z_f|$ possible ways for the fake coins to be distributed. However, as shown in \cite{NDTK}, if a WS $\mathcal{A}$ proves that there are $f$ and not $d$ fake coins then the number of admissible situations is necessarily reduced; consequently, $W(\mathbf{s}|Z_f,\mathcal{A}) \subset Z_f$ and thus $|W(\mathbf{s}|Z_f,\mathcal{A})| < |Z_f|$.
\end{proof}

This generalization of Shapovalov's puzzle allows us to redefine the revealing factor more formally:

\begin{definition}\label{def:5}
The revealing factor $X$ for a given set of initial parameters $t$, $f$, and $d$ and a WS $\mathcal{A}$ is defined as the ratio of the number of possibilities for the distribution of the $f$ fake coins before the weighings take place to the number of possibilities after the weighings:

$$X = \dfrac{|Z_f|}{|W(\mathbf{s}|Z_f,\mathcal{A})|} $$
\end{definition}

The calculation of $W (\mathbf{s}|Z_f,\mathbf{h}^j)$ at any given step $1 \le j \le m$ during a series of weighings must be done on a case-by-case basis, but explicit formulas can be given for the initial weighing. Given that the observer knows that there are $f$ counterfeit coins, the number of new possibilities after one balanced weighing (\textit{i.e.} $m=1$) in which there are $n$ coins in both pans is

\begin{equation}\label{eq:1}
|W(0|Z_f,\mathbf{h}^1)| =
\sum_{i=0}^{n} \dbinom{n}{i}^2 \dbinom{t-2n}{f-2i} = B_{(f,n)},
\end{equation}

and for a single unbalanced weighing the number of new possibilities is 

\begin{equation}\label{eq:2}
|W(-1|Z_f,\mathbf{h}^1)| = |W(1|Z_f,\mathbf{h}^1)| =
\sum_{j=i+1}^{n} \sum_{i=0}^{n-1} \dbinom{n}{i} \dbinom{n}{j} \dbinom{t-2n}{f-i-j} = U_{(f,n)}.
\end{equation}

Because $W(s|Z_f,\mathbf{h}^1)$, $s \in I$ are disjoint partitions of $Z_f$ according to our definition of classification, the above equations lead to a special case of Vandermonde's identity: 
\begin{equation}\label{eq:vandermonde}
B_{(f,n)} + 2\cdot U_{(f,n)} = \sum_{j=0}^{n} \sum_{i=0}^{n} \dbinom{n}{i} \dbinom{n}{j} \dbinom{t-2n}{f-i-j} = \binom{t}{f}.
\end{equation}

Additionally, using equations (\ref{eq:1}) and (\ref{eq:2}), we can derive a tighter upper bound for $|W(\mathbf{s}|Z_f,\mathcal{A})|$ than the one given in (\ref{bounds}). Since every successful discreet WS $\mathcal{A}$ must have a positive number of weighings $m \ge 1$, the value of $|W(\mathbf{s}|Z_f,\mathcal{A})|$ must be less than or equal to the maximum value of possible distributions of the $f$ counterfeit coins after one discreet weighing, denoted by $\overline{\text{max}}(B_{(f,n)}, U_{(f,n)})$. That is,

\begin{equation}\label{newbounds}
 |W(\mathbf{s}|Z_f,\mathcal{A})| \le \overline{\text{max}}(B_{(f,n)}, U_{(f,n)}).
\end{equation}

For instance, consider the case with $f=2$, $d$ odd, and $t \ge 8$. Since any unbalanced weighing is necessarily indiscreet as shown in \cite{NDTK}, we have $\overline{\text{max}}(B_{(2,n)}, U_{(2,n)}) = \overline{\text{max}}(B_{(2,n)})$. For the given parameters, this maximum value occurs at $n=1$, so $\overline{\text{max}}(B_{(2,n)}) = B_{(2,1)} = \binom{t-2}{2}+1$, which is consistent with the optimal weighing strategies described for the given parameters in \cite{NDTK}.

In many coin weighing problems, great emphasis is placed on the order in which specified measurements take place. On the contrary, for any successful weighing strategy $\mathcal{A}$ of length $m$, the order in which the weighings $\mathbf{h}^j$ take place for $1 \le j \le m$ does not affect $W(\mathbf{s}|Z_f,\mathcal{A})$ or $W(\mathbf{s}|Z_d,\mathcal{A})$. Consider the procedure of a WS $\mathcal{A}$ in the situation $\mathbf{z}$: $\mathcal{A}(\mathbf{z}) = \langle \mathbf{h}^1, \ldots, \mathbf{h}^m \rangle$. Recall that the result $s(\mathbf{z},\mathbf{h}^j)$ of a weighing $\mathbf{h}^j$ classifies the elements $\mathbf{z} \in Z$ into one of three partition subsets $W(s(\mathbf{z},\mathbf{h}^j)|Z,\mathbf{h}^j) = W(s(\mathbf{z},\mathbf{h}^j)|I^t,\mathbf{h}^j) \cap Z$. The result $s(\mathbf{z},\mathbf{h}^k)$ of a subsequent weighing $\mathbf{h}^k$ for $k \neq j$ further classifies the elements $\mathbf{z} \in Z$ into $W((s(\mathbf{z},\mathbf{h}^j), s(\mathbf{z},\mathbf{h}^k))|Z, \langle \mathbf{h}^j, \mathbf{h}^k \rangle) = W(s(\mathbf{z},\mathbf{h}^j)|Z,\mathbf{h}^j) \cap W(s(\mathbf{z},\mathbf{h}^k)|Z,\mathbf{h}^k)$. Because this process continues in the same way for all subsequent weighings, given $\mathcal{A}(\mathbf{z}) = \langle \mathbf{h}^1, \ldots, \mathbf{h}^m \rangle$ we have

$$W(\mathbf{s}|Z,\mathcal{A}) = \bigcap_{j=1}^{m} W(s(\mathbf{z},\mathbf{h}^j)|Z,\mathbf{h}^j). $$

Since the intersection operation on sets is commutative, \textit{i.e.} $A \cap B = B \cap A$ for any sets $A$ and $B$, it follows that the order in which the weighings of a WS $\mathcal{A}$ take place do not affect $W(\mathbf{s}|Z,\mathcal{A})$.

Although the order of the sequence of weighings does not affect the final outcome, each weighing tends to reveal a different amount of information about the nature of the coins' distribution. If the goal of another party is to guess the locations of the $f$ fake coins, they may choose to do so before all the weighings have taken place. With this in mind, we may be able to specifically choose the order of the weighings to maximize the number of new possibilities at each given step. Consider Strategy~\ref{str:1}, where $\mathbf{h}^n$ denotes the $n$th listed weighing, and $\varnothing$ denotes the state before any weighings have taken place, \textit{i.e.} $W(\mathbf{s}|Z,\varnothing) = Z$.

\begin{table}[h!]
\centering
 \begin{tabular}{|c | c | c|} 
 \hline
 Weighings ($\langle\mathbf{h}\rangle$) & $|W(\mathbf{s}|Z_3,\langle\mathbf{h}\rangle)|$ & $|W(\mathbf{s}|Z_2,\langle\mathbf{h}\rangle)|$ \\ [0.5ex] 
 \hline
 $\varnothing$ & 82160 & 3160  \\ 
 \hline
 $\langle\mathbf{h}^1\rangle$ & 19140 & 1090  \\
 \hline
 $\langle\mathbf{h}^2\rangle$ & 25880 & 1180  \\
 \hline
 $\langle\mathbf{h}^3\rangle$ & 41080 & 780  \\
 \hline
 $\langle\mathbf{h}^1$, $\mathbf{h}^2\rangle$ & 16000 & 400  \\
 \hline
 $\langle\mathbf{h}^1$, $\mathbf{h}^3\rangle$ & 20000 & 290  \\
 \hline
 $\langle\mathbf{h}^2$, $\mathbf{h}^3\rangle$ & 16000 & 400  \\
 \hline
 $\langle\mathbf{h}^1$, $\mathbf{h}^2$, $\mathbf{h}^3\rangle$ & 8000 & 0  \\
 \hline
 \end{tabular}
 \caption{A list of the values of $|W(\mathbf{s}|Z,\langle\mathbf{h}\rangle)|$ for different combinations of the Strategy~\ref{str:1} weighings.}
 \label{table:1}
\end{table}

If an additional goal of our weighing strategy is to maximize $|W(\mathbf{s}|Z_3,\langle\mathbf{h}\rangle)|$ at each given step, the best sequence of weighings in this case is clearly $\mathbf{h}^3$, $\mathbf{h}^1$, $\mathbf{h}^2$.

\section{Optimal Indiscreet Strategies: A Lower Bound on $|W(\mathbf{s}|Z_f, \mathcal{A})|$}\label{indiscreet}

We define an \textit{optimal} strategy as the weighing strategy $\mathcal{A}$ which yields the highest possible value of $|W(\mathbf{s}|Z_f, \mathcal{A})|$ for given parameters. As previously shown in \cite{NDTK}, the following is a generalization of Strategy~\ref{str:2}. This strategy is successful and indiscreet for some positive integer $a>1$ such that $a \mid f$, $a \nmid d$, $a \nmid t$, $t > 2a$, and $\big\lfloor \frac{t}{a} \big\rfloor - \big\lceil \frac{d}{a} \big\rceil > \frac{f}{a}$:

\begin{quote}
Divide the $t$ coins into $a$ piles of size $\big\lfloor \frac{t}{a} \big\rfloor $ containing $\frac{f}{a}$ fake coins each, along with an additional pile of $t - a \big\lfloor \frac{t}{a} \big\rfloor$ leftover real coins. Use the balance beam to show that each of the $a$ piles of size $\big\lfloor \frac{t}{a} \big\rfloor$ has the same weight. Next, compare all of the leftover coins to each other so show that they are equal in weight. At this point, the number of fake coins may thus be expressed in one of two ways: $ak$ or $ak + t - a \big\lfloor \frac{t}{a} \big\rfloor$ for some nonnegative integer $k$. If necessary, compare a coin from the leftover pile to several real coins from the larger piles to demonstrate that there are at least $d+1$ coins that have the same weight. For this step, a total of $d+ 1 - (t - a \big\lfloor \frac{t}{a} \big\rfloor)$ coins are borrowed from the larger piles. Because $a \nmid d$, it follows that there must be $f$ fake coins.

For the given parameters, the set of admissible situations $W(\mathbf{s}|Z_f, \mathcal{A})$ includes situations in which all the fake coins are in the original $a$ piles, with each pile containing exactly $\frac{f}{a}$ fake coins. A simple calculation shows that these $a$ piles consist of $a\big\lfloor \frac{t}{a} \big\rfloor - a\big\lfloor \frac{d}{a} \big\rfloor +d + 1 -t$ piles of size $\big\lfloor \frac{t}{a} \big\rfloor - \big\lceil \frac{d}{a} \big\rceil$ in addition to $a - a\big\lfloor \frac{t}{a} \big\rfloor + a\big\lfloor \frac{d}{a} \big\rfloor -d - 1 +t$ piles of size $ \big\lfloor \frac{t}{a} \big\rfloor - \big\lfloor \frac{d}{a} \big\rfloor$.

\end{quote}

\begin{theorem}\label{thm:2}
If $a>1$ divides $f$ and neither $d$ nor $t$, $t>2a$, and $\big\lfloor \frac{t}{a} \big\rfloor - \big\lceil \frac{d}{a} \big\rceil > \frac{f}{a}$, then an optimal indiscreet weighing strategy must satisfy the following:

\begin{equation}\label{eq:3}
\dbinom{\big\lfloor \frac{t}{a} \big\rfloor - \big\lceil \frac{d}{a} \big\rceil}{\frac{f}{a}}^{a\big\lfloor \frac{t}{a} \big\rfloor - a\big\lfloor \frac{d}{a} \big\rfloor +d + 1 -t} \dbinom{\big\lfloor \frac{t}{a} \big\rfloor - \big\lfloor \frac{d}{a} \big\rfloor}{\frac{f}{a}}^{a - a\big\lfloor \frac{t}{a} \big\rfloor + a\big\lfloor \frac{d}{a} \big\rfloor -d - 1 +t} \le |W(\mathbf{s}|Z_f, \mathcal{A})|.
\end{equation}
\end{theorem}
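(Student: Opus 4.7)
The plan is to verify the inequality constructively: the left-hand side of~\eqref{eq:3} is exactly $|W(\mathbf{s}|Z_f,\mathcal{A})|$ for the generalized version of Strategy~\ref{str:2} described in the paragraph preceding the theorem. Since an optimal strategy by definition attains the maximum possible value of $|W(\mathbf{s}|Z_f,\mathcal{A})|$, any optimal indiscreet weighing strategy must meet at least this lower bound.

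The excerpt has already exhibited this strategy and sketched why it is successful (combining the balanced $a$-pile comparisons with the auxiliary $d+1$ equal-weight demonstration to rule out $d$ via $a \nmid d$) and indiscreet (by $\lfloor t/a \rfloor - \lceil d/a \rceil > f/a$). My remaining task is to count the surviving admissible situations. Because the $a$ balanced comparisons force each large pile to contain the same number $f/a$ of fakes, and because the $d+1$ coins in the equal-weight demonstration are forced to be real (else their number of fakes would exceed the only alternative hypothesis $d$), every $\mathbf{z} \in W(\mathbf{s}|Z_f,\mathcal{A})$ arises by independently choosing $f/a$ fake positions among the unknown positions of each large pile.

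A direct check pins down the unknown pile sizes. Writing $d = a \lfloor d/a \rfloor + d_r$ with $1 \le d_r \le a-1$ (by $a \nmid d$) and $r := t - a \lfloor t/a \rfloor$ with $1 \le r \le a - 1$ (by $a \nmid t$), distributing the $d+1-r$ borrowed coins as evenly as possible across the $a$ piles yields exactly $d_r + 1 - r$ piles surrendering $\lceil d/a \rceil$ coins each (leaving unknown size $\lfloor t/a \rfloor - \lceil d/a \rceil$) and $a - d_r - 1 + r$ piles surrendering $\lfloor d/a \rfloor$ coins each (leaving unknown size $\lfloor t/a \rfloor - \lfloor d/a \rfloor$); one readily verifies that the multiplicities sum to $a$ and the coins surrendered sum to $d+1-r$. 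Taking the product of $\binom{\text{unknown size}}{f/a}$ across the $a$ piles produces precisely the left-hand side of~\eqref{eq:3}.

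The main obstacle is the discrete case analysis around the borrowing step: confirming that both multiplicities $d_r + 1 - r$ and $a - d_r - 1 + r$ are nonnegative in the regime covered by the hypotheses (or that the formula degenerates consistently in the boundary cases where $d + 1 \le r$ and no borrowing is required at all), checking that the hypothesis $\lfloor t/a \rfloor - \lceil d/a \rceil > f/a$ really does guarantee room for $f/a$ fakes in every pile and simultaneously secures indiscreetness, and verifying that $t > 2a$ is enough to realize the leftover-and-borrow configuration. Once this residue bookkeeping modulo $a$ is handled carefully, the counting identity---and hence the inequality---drops out immediately.
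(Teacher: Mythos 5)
Your proposal is correct and takes the same approach as the paper: the paper's own proof simply observes that the previously described generalization of Strategy~\ref{str:2} is successful under the stated hypotheses and realizes the left-hand side of~(\ref{eq:3}) as its count of admissible situations, so an optimal strategy must do at least as well. Your version is in fact considerably more detailed than the paper's two-sentence proof, carrying out the residue bookkeeping that the paper relegates to the strategy's description (note only that the paper claims $W(\mathbf{s}|Z_f,\mathcal{A})$ \emph{includes} the counted situations rather than equals them, which is all the lower bound requires).
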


\begin{proof}
The above WS is guaranteed to be successful for the given conditions. Furthermore, it follows that an optimal indiscreet strategy in the given scenarios must satisfy (\ref{eq:3}).
\end{proof}

It is worth noting that most of the time this strategy can be optimized to yield a result much better than the lower bound given in (\ref{eq:3}). For example, in the cases for which $d - a\big\lfloor \frac{d}{a} \big\rfloor > t -  a\big\lfloor \frac{t}{a} \big\rfloor$, or the remainder when $d$ is divided by $a$ is greater than the remainder when $t$ is divided by $a$, then $\mathcal{A}$ does not need to make comparisons between anything other than the $a$ large groups because the existence of exactly $d$ fake coins is impossible. In this case, the number of admissible situations after the series of weighings can be calculated exactly as $ |W(\mathbf{s}|Z_f, \mathcal{A})| = \dbinom{ \big\lfloor t/a \big\rfloor}{f/a} ^a$. As $t$ tends to infinity for such cases, the revealing factor approaches the same limit as (\ref{xinf}).

\section{A Conjecture on $|W(\mathbf{s}|Z_f, \mathcal{A})|$ for Optimal Discreet Strategies}\label{discreet}

The following is a generalization of Strategy~\ref{str:3}, and is successful and discreet for any initial set of parameters $t$, $f$, and $d$ that satisfy $\Bigl\lfloor \dfrac{t}{f} \Bigr\rfloor \ge 4 $ and $0 < d < f$; Theorem~\ref{easydisc}, first stated in \cite{NDTK}, follows immediately. We additionally use this strategy to conjecture a new lower bound on $|W(\mathbf{s}|Z_f, \mathcal{A})|$ for an optimal discreet $\mathcal{A}$ in a much broader case.

\begin{quote}
Let $t = f k + r$, where $k$ and $r$ are positive integers, $0 < r < f$, and $k \ge 4$. Begin by splitting the coins into $3f$ total groups: $A_1, A_2, \ldots , A_f \in \mathbold{A}$ , $B_1, B_2, \ldots , B_f \in \mathbold{B}$, and $C_1, C_2, \ldots , C_f \in \mathbold{C}$. The lawyer will put one fake coin in each of either $A_i$, $B_i$, or $C_i$, for $i = 1, 2, \ldots , f$.

In groups $A_i$ for $1 \le i \le r$, we will have $|A_i| = k - 2$, and for $r + 1 \le i \le f$ we will have $|A_i| = k - 3$. Similarly, we will have $|B_i| = 1$ for $1 \le i \le r$ and $|B_i| = 2$ otherwise, and in groups $C_i$ for $1 \le i \le r$ we will have $|C_i| = 2$ and $|C_i| = 1$ for all other values of $i$.

Now we carry out the weighings as follows. In $f - 1$ weighings, we show that the $k - 1$ coins from each $A_i + B_i$ for $1 \le i \le f$ balance one another on the scale. In $f - 1$ more weighings, we demonstrate that the 3 coins from each
$B_i + C_i$ for $1 \le i \le f$ are equal in weight.
\end{quote}

As shown in \cite{NDTK}, if $0 < d < f$ then the procedure of $\mathcal{A}$ guarantees that exactly one of the following cases is true:

\begin{onehalfspacing}

\begin{enumerate}
    \item each group $A_i \in \mathbold{A}$ contains one fake coin,
    \item each group $B_i \in \mathbold{B}$ contains one fake coin,
    \item each group $C_i \in \mathbold{C}$ contains one fake coin.
\end{enumerate}

\end{onehalfspacing}

\begin{theorem}\label{easydisc}
If $ \Bigl\lfloor \dfrac{t}{f} \Bigr\rfloor \ge 4 $ and $0<d<f$, there exists a successful discreet strategy.
\end{theorem}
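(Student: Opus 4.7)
The plan is to show that the strategy quoted just above realizes both clauses of Definition~\ref{def:3} and the discreetness clause of Definition~\ref{def:4}. First one checks the routine bookkeeping that
\[
r(k-2)+(f-r)(k-3)+r+2(f-r)+2r+(f-r)=fk+r=t,
\]
and that every weighing is balanced in cardinality, with $|A_i+B_i|=k-1$ and $|B_i+C_i|=3$. The substantive task is to classify the fake distributions that survive the $2(f-1)$ imposed balances.

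Let $a_i,b_i,c_i$ denote the number of fake coins in $A_i,B_i,C_i$. The first $f-1$ balanced weighings force $\alpha:=a_i+b_i$ to be independent of $i$, and the remaining $f-1$ force $\beta:=b_i+c_i$ to be constant as well. Setting $B=\sum_i b_i$, summation gives
\[
(\text{total fakes})\;=\;f(\alpha+\beta)-B,
\]
and since $0\le B\le \sum_i|B_i|=2f-r$ with $r\ge 1$, the requirement that this total lie in the observer's candidate set $\{d,f\}\subseteq[1,f]$ forces $\alpha+\beta\in\{1,2\}$. A short case analysis over $(\alpha,\beta)\in\{(1,0),(0,1),(2,0),(0,2),(1,1)\}$, using the pointwise caps $b_i\le|B_i|\le 2$ and $c_i\le|C_i|\le 2$, then rules out $(2,0)$ and $(0,2)$ (they force a total of $2f$, which cannot equal $d$ or $f$) and pins the three surviving configurations: exactly one fake in each $A_i$, exactly one fake in each $B_i$, or exactly one fake in each $C_i$, each yielding total $f$.

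Both clauses of the theorem drop out of this structural claim. Successfulness is immediate, because every surviving distribution carries exactly $f$ fakes, so no $\mathbf{z}\in Z_d$ can produce the observed syndrome and $|W(\mathbf{s}|Z_d,\mathcal{A})|=0$. Discreetness is equally clean: for any coin $z\in A_j$, the configurations in which all fakes lie in $\mathbold{B}$ or in $\mathbold{C}$ make $z$ real, whereas in the $\mathbold{A}$-configuration one may take $z$ itself as the single fake of $A_j$ (possible because $|A_j|\ge k-3\ge 1$), so $z$ takes both values across admissible situations; the symmetric argument handles $z\in B_j$ and $z\in C_j$.

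Finally, the hypothesis $\lfloor t/f\rfloor\ge 4$ also covers the edge case $f\mid t$, which the quoted construction excludes via $r\ge 1$. There Strategy~\ref{str:6} with $a=f$ applies directly: the divisibility conditions $a\mid f$, $a\mid t$, and $a\nmid d$ hold (the last because $0<d<f$), and discreetness is automatic since each of the $f$ equal piles has size $t/f\ge 4\ge 2$. The main obstacle throughout is the case analysis isolating the three structural configurations; the remaining steps reduce to straightforward counting.
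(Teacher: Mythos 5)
Your argument is correct and follows the same route as the paper: exhibit the generalized form of Strategy~\ref{str:3} and show that the only admissible situations whose totals lie in $\{d,f\}$ are the three configurations in which every $A_i$, every $B_i$, or every $C_i$ holds exactly one fake coin, from which successfulness and discreetness follow. The difference is one of self-containedness: the paper simply cites \cite{NDTK} for that structural claim and declares the theorem immediate, whereas you actually prove it, via the identity $\text{(total fakes)} = f(\alpha+\beta)-B$ together with $0\le B\le 2f-r$, which forces $\alpha+\beta\in\{1,2\}$ and then pins down the three configurations. You also repair a genuine, if minor, gap: the quoted construction assumes $t=fk+r$ with $0<r<f$, so it does not literally cover $f\mid t$, even though the hypothesis $\lfloor t/f\rfloor\ge 4$ allows it; your fallback to Strategy~\ref{str:6} with $a=f$ (successful because $0<d<f$ rules out $f\mid d$, discreet because each pile has $t/f\ge 4$ coins and exactly one fake) closes that case. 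The only spot deserving one more line is the $(\alpha,\beta)=(1,1)$ subcase: state explicitly that $b_i\le\alpha=1$ gives $B\le f$, hence the total $2f-B\ge f$ with equality only when every $b_i=1$, which is precisely what excludes the totals strictly between $f$ and $2f$ that become problematic when $d>f$.
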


Unfortunately, a problem arises when we consider values of $d > f$. For example, a quick check reveals that if we put one counterfeit coin in $B_1$ and place another counterfeit coin in each of $A_i$ and $C_i$ for $ 2 \le i \le f$, then $2f-1$ fake coins is an admissible situation for this checking procedure.

This strategy can be augmented by making all possible comparisons $A_w$ + $B_x$ = $A_y$ + $B_z$ and $B_w$ + $C_x$ = $B_y$ + $C_z$ on a scale with $w \not= y$, $x \not= z$, and each of $w,x,y,z$ located in either the closed interval $[1,r]$ or the closed interval $[r+1,f]$. We conjecture that these additional weighings ensure that no situation such as the one described above can exist as an admissible situation for this strategy for specific values of $d$. More specifically, we can conjecture the following:

\begin{conjecture}
If $ \Bigl\lfloor \dfrac{t}{f} \Bigr\rfloor \ge 4 $ and either $0<d<f$ or $d$ is not of the form $qf$ or $qf \pm r$ for some positive integer $q$, there exists a discreet strategy.
\end{conjecture}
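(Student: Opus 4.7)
The plan is to enumerate the admissible situations that survive the augmented weighings, show that their total fake counts can take only a small set of residues modulo $f$, and then match these residues against the excluded forms $qf$ and $qf \pm r$. Writing $a_i, b_i, c_i$ for the number of fake coins in $A_i, B_i, C_i$, the basic weighings $A_i + B_i = A_j + B_j$ and $B_i + C_i = B_j + C_j$ force $a_i + b_i$ and $b_i + c_i$ to be constants $\alpha$ and $\beta$ independent of $i$, and any balanced augmented weighing $A_w + B_x = A_y + B_z$ (whose two pans hold equally many coins) additionally yields $a_w + b_x = a_y + b_z$, with the analogous identity for the $B + C$ comparisons.

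Next, I would show that the augmented comparisons with all four indices in a common range $[1, r]$ or $[r+1, f]$ force $a_i$, $b_i$, and $c_i$ to be constant on each range. Fixing $x \neq z$ in a range and varying $w \neq y$, the relation $a_w - a_y = b_z - b_x$ has a right-hand side independent of $w, y$, so $a_i$ is constant on the range; varying $x, z$ then forces $b_i$ to be constant as well, and the $B + C$ weighings likewise pin down $c_i$. A single-element range is constant trivially. Denoting the common values on the two ranges by $a^{(1)}, b^{(1)}, c^{(1)}$ and $a^{(2)}, b^{(2)}, c^{(2)}$, the admissible situations are parameterized by six nonnegative integers subject to the size bounds $a^{(1)} \le k-2$, $a^{(2)} \le k-3$, $b^{(1)} \le 1$, $b^{(2)} \le 2$, $c^{(1)} \le 2$, $c^{(2)} \le 1$ and the constraints $a^{(1)} + b^{(1)} = a^{(2)} + b^{(2)}$, $b^{(1)} + c^{(1)} = b^{(2)} + c^{(2)}$.

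Setting $\gamma^{(j)} = a^{(j)} + b^{(j)} + c^{(j)}$ and $\delta = \gamma^{(1)} - \gamma^{(2)}$, adding the two balance constraints gives $\delta = b^{(2)} - b^{(1)}$, so $\delta \in \{-1, 0, 1, 2\}$, and the total fake count takes the form $d = r \gamma^{(1)} + (f-r) \gamma^{(2)} = f \gamma^{(2)} + r \delta$. A case-by-case treatment of each admissible $\delta$, using the remaining size bounds to pin down the allowed range of $\gamma^{(2)}$, then lists precisely the residues $r \delta \bmod f$ that any admissible total can attain, and comparison against the excluded residues finishes the proof.

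The main obstacle is the $\delta = 2$ case. It forces $b^{(1)} = 0$, $b^{(2)} = 2$, $c^{(1)} = 2$, $c^{(2)} = 0$, and $a^{(1)} = a^{(2)} + 2$, and it remains feasible whenever $k \ge 4$, contributing admissible totals of the form $qf + 2r$ with $q \ge 2$. Unless this case can be eliminated --- for instance, by supplementing the strategy with direct comparisons among the size-$2$ groups in $\mathbf{B}$ and $\mathbf{C}$ --- the conjecture as stated will need to be refined to additionally exclude $d$ of the form $qf + 2r$. Settling which of these two resolutions is correct is the crux of completing the proof.
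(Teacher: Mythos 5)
The paper offers no proof of this statement: it is posed as an open conjecture, supported only by the informal remark that the augmented comparisons ``ensure'' the bad situations are eliminated. Your attempt therefore goes strictly further than the paper does, and the analysis is correct as far as it goes. The reduction to values $a^{(j)},b^{(j)},c^{(j)}$ constant on each of the two index ranges is sound (swapping $w$ and $y$ in $a_w-a_y=b_z-b_x$ forces $a_w=a_y$, and singleton ranges are trivially constant), the two balance constraints give $\delta=b^{(2)}-b^{(1)}=c^{(1)}-c^{(2)}=a^{(1)}-a^{(2)}\in\{-1,0,1,2\}$, and the admissible totals $f\gamma^{(2)}+r\delta$ for $\delta\in\{-1,0,1\}$ land exactly on the excluded forms $qf-r$, $qf$, $qf+r$. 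The $\delta=2$ obstruction you isolate is genuine and not an artifact of your bookkeeping: the situation with two fake coins in each of $A_1,\dots,A_r$ and $C_1,\dots,C_r$ and two in each of $B_{r+1},\dots,B_f$ balances every weighing the strategy makes (every $A_i+B_i$ and every $B_i+C_i$ carries exactly two fakes, and all counts are constant on each range), is feasible precisely when $k\ge 4$, and contains $2f+2r$ fakes, which is generically not of the form $qf$ or $qf\pm r$. So the strategy the paper proposes cannot prove the conjecture as stated; this is a real finding, not a gap in your reasoning.

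Where your proposal falls short of a resolution is exactly where you say it does, and your suggested patch does not work. Comparing size-$2$ groups of $\mathbold{B}$ against size-$2$ groups of $\mathbold{B}$ (or $\mathbold{C}$ against $\mathbold{C}$) is vacuous, since both pans carry equal fake counts in the bad situation as well as in all intended ones. Comparing a size-$2$ group of $\mathbold{B}$ against a size-$2$ group of $\mathbold{C}$ does distinguish the bad situation in principle, but it also produces three different outcomes across the three intended families (fakes all in $\mathbold{A}$, all in $\mathbold{B}$, or all in $\mathbold{C}$), so whichever outcome is displayed eliminates two of the three families and reveals the coins in them as genuine, destroying discreetness. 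The honest conclusion is that either the conjecture's hypothesis must additionally exclude $d$ of the form $qf+2r$ with $q\ge 2$, or a genuinely different strategy (for instance one built from Strategy~\ref{str:7}) must be produced for those values of $d$; your write-up correctly identifies this fork but does not settle it, so it is not yet a proof --- though it is a useful refutation of the paper's proposed route.
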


Furthermore, if we let $\mathbold{\delta} = \{\mathbold{A},\mathbold{B},\mathbold{C}\}$ and let $\delta_i$ be the group of coins representing either $A_i$, $B_i$, or $C_i$, respectively, then it can be shown that this WS $\mathcal{A}$ gives us

$$ |W(\mathbf{s}|Z_f, \mathcal{A})| = \sum_{\mathbold{\delta}} \prod_{i=1}^f |\delta_i| = (k-2)^r(k-3)^{f-r} + 1^r \cdot 2^{f-r} + 2^r \cdot 1^{f-r},$$

or in terms of $t$ and $f$ exclusively,

\begin{equation}\label{eq:4}
|W(\mathbf{s}|Z_f, \mathcal{A})| =  \Bigl( \Bigl\lfloor \frac{t}{f} \Bigr\rfloor - 2 \Bigr)^{t- f \lfloor \frac{t}{f} \rfloor} \Bigl( \Bigl\lfloor \frac{t}{f} \Bigr\rfloor - 3 \Bigr)^{f-t + f \lfloor \frac{t}{f} \rfloor} + 2^{f-t + f \lfloor \frac{t}{f} \rfloor}  + 2^{t-f \lfloor \frac{t}{f} \rfloor}.
\end{equation}

If the above conjecture is true, then an optimal discreet weighing strategy for the given parameters must satisfy $|W(\mathbf{s}|Z_f, \mathcal{A})| \ge (\ref{eq:4})$. Regardless of the conjecture's validity, $|W(\mathbf{s}|Z_f, \mathcal{A})| \ge (\ref{eq:4})$ is satisfied for an optimal discreet strategy whenever $\Bigl\lfloor \dfrac{t}{f} \Bigr\rfloor \ge 4 $ and $0 < d < f$.

\section{A Discreet Weighing Strategy Using Linear Combinations}

Strategy \ref{str:6} enables us to show that the number of fake coins is a multiple of some positive integer $a > 1$. We would like to describe a new WS $\mathcal{A}$ that can demonstrate that the number of fake coins is some linear combination of conveniently chosen positive integers. That is, we want to show that $f$ can be represented in the form 

\begin{equation}\label{eq:6}
f=c_1x_1 + c_2x_2 + c_3x_3 + \ldots + c_kx_k
\end{equation}
for fixed positive integers $c_1,...,c_k\ge2$ and nonnegative integers $x_1,\dots,x_k$. We will call vector $\mathbf{x} = (x_1,\dots,x_k)$ a \textit{solution vector} if it satisfies equation (\ref{eq:6}).

\begin{strategy}\label{str:7} Organize the $t$ coins into $c_i$ groups of size $g_i$ with the same number of fake coins in each group for $1 \le i \le k$, so that the following conditions are met:

\begin{equation}\label{eq:7}
\sum_{i=1}^{k} c_i g_i = t \qquad \qquad 2 \le c_i \le f \qquad \qquad 1 \le g_i \le t/2
\end{equation}

and so that

\begin{equation}\label{eq:8}
\sum_{i=1}^{k} c_i x_i = f \qquad \qquad 0 \le x_i \le g_i
\end{equation}

is satisfied by at least one solution vector $\mathbf{x}$, and

\begin{equation}\label{eq:9}
\sum_{i=1}^{k} c_i x_i = d \qquad \qquad 0 \le x_i \le g_i
\end{equation} 
has no solution vector. 

The weighings are carried out as follows: compare all $c_i$ piles of size $g_i$ with each other on the balance scale for $1 \le i \le k$. Since both the number of total coins and the number of fake coins is the same in each of the $c_i$ groups of size $g_i$, all weighings will be balanced. This implies that the number of fake coins is of the form $\sum c_ix_i$ for $0 \le x_i \le g_i$, where each value of $x_i$ represents a possible number of fake coins in the group.

\end{strategy}

\begin{theorem}\label{lincombtheorem}
If conditions (\ref{eq:7}), (\ref{eq:8}), and (\ref{eq:9}) are satisfied, and if each value of $x_i$ for $1\le i \le k$ is both strictly greater than $0$ and strictly less than $g_i$ for at least one solution vector $\mathbf{x}$, then Strategy~\ref{str:7} satisfies the conditions in Definition~\ref{def:3} and Definition~\ref{def:4} for a successful, discreet strategy.
\end{theorem}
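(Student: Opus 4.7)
The plan is to analyze the unique syndrome Strategy~\ref{str:7} can produce, characterize the situations in $\{0,1\}^t$ compatible with it, and then match those against the three conditions in Definitions~\ref{def:3} and~\ref{def:4}. I would begin by noting that within each family of $c_i$ piles of size $g_i$, both the number of total coins and the number of fakes per pile are equal by construction, so every weighing in $\mathcal{A}$ balances and the syndrome $\mathbf{s}$ produced from the true distribution is the all-zero vector. Conversely, for a situation $\mathbf{z}\in\{0,1\}^t$ to yield this same syndrome, each family of $c_i$ piles of common size $g_i$ must carry the same number of fakes across all its piles; equivalently, there must exist nonnegative integers $x_1,\dots,x_k$ with $0\le x_i\le g_i$ such that each pile of size $g_i$ in $\mathbf{z}$ contains exactly $x_i$ fakes. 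The total number of fakes in such $\mathbf{z}$ is then $\sum_i c_i x_i$, so $W(\mathbf{s}|Z_n,\mathcal{A})$ is nonempty if and only if $n$ admits a representation $\sum_i c_i x_i = n$ subject to the stated bounds on $x_i$.

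I would then verify the two successful conditions in turn. Hypothesis (\ref{eq:8}) provides at least one solution vector $\mathbf{x}$ with $\sum c_i x_i = f$, and any placement of $x_i$ fake coins in each pile of size $g_i$ agreeing with $\mathbf{x}$ gives an element of $W(\mathbf{s}|Z_f,\mathcal{A})$, so $|W(\mathbf{s}|Z_f,\mathcal{A})|\ge 1$, fulfilling the second condition of Definition~\ref{def:3}. Hypothesis (\ref{eq:9}) rules out any such representation of $d$, so by the characterization above $W(\mathbf{s}|Z_d,\mathcal{A}) = \varnothing$, fulfilling the first condition. For discreetness I would invoke the remaining hypothesis: pick a solution vector $\mathbf{x}$ with $0<x_i<g_i$ for every $i$. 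For each coin $j$ lying in some pile $P$ of size $g_i$, the set $W(\mathbf{s}|Z_f,\mathcal{A})$ contains all assignments for which $P$ has any chosen subset of size $x_i$ as its fakes; since $0<x_i<g_i$, some such subset contains $j$ and some does not, so $z_j$ attains both values $0$ and $1$ across $W(\mathbf{s}|Z_f,\mathcal{A})$, meeting Definition~\ref{def:4}.

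The main obstacle I anticipate is not the logic but making the characterization of $W(\mathbf{s}|Z_f,\mathcal{A})$ airtight, namely that this set is genuinely closed under arbitrary permutations of coin labels \emph{within} each individual pile. Strategy~\ref{str:7} is label-agnostic once the grouping is fixed, so each balance-beam comparison depends only on pile fake-counts rather than on individual coin identities; once this invariance is formalized, it simultaneously supplies the nonemptiness step and the both-values-attained property in the discreet step. The remainder of the argument is a direct translation of conditions (\ref{eq:7})--(\ref{eq:9}) into statements about membership in the syndrome class.
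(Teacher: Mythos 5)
Your proposal is correct and follows the same line of reasoning the paper relies on: the paper states Theorem~\ref{lincombtheorem} without a separate proof, treating it as immediate from the observation in Strategy~\ref{str:7} that all weighings balance and hence the fake count must be expressible as $\sum_i c_i x_i$ with $0 \le x_i \le g_i$. You supply exactly the details the paper leaves implicit --- the characterization of the balanced syndrome class, the use of (\ref{eq:8}) and (\ref{eq:9}) for the two conditions of Definition~\ref{def:3}, and the within-pile permutation invariance combined with $0 < x_i < g_i$ for Definition~\ref{def:4} --- so there is no gap to report.
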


If the above strategy is discreet, the number of new possibilities for the distribution of the $f$ fake coins can be calculated as

\begin{equation}\label{eq:10}
    |W(\mathbf{s}|Z_f,\mathcal{A})| = \sum_{\mathbf{x}} \prod_{i=1}^{k} \dbinom{g_i}{x_i}^{c_i}.
\end{equation}

An efficient algorithm for finding every solution vector $\mathbf{x}$ that satisfies the above relations is given in \cite{algo}. Nevertheless, the problem of maximizing (\ref{eq:10}) subject to (\ref{eq:7}), (\ref{eq:8}), and (\ref{eq:9}) is still a difficult one, and may require brute force searches.

With $k=1$, $c_1=a$, and $g_1 = t/a$ for some integer $a > 1$ that divides both $f$ and $t$ but not $d$, notice that Strategy~\ref{str:7} is identical to Strategy~\ref{str:6}. However, with a slight adjustment, it is often possible to do significantly better than Strategy~\ref{str:6} given certain parameters, especially when one value of $c_j$ can be written as a linear combination of the other values of $c_i$ for $i \neq j$. For example, consider the case of $t=70$, $f=7$, and $d=1$. Using Strategy~\ref{str:6} with $a=7$, we have $|W(\mathbf{s}|Z_f,\mathcal{A})|=\binom{10}{1}^7$ and $X \approx 119.88$. Using Strategy~\ref{str:7} with $(c_1,c_2,c_3)=(2,2,3)$ and $(g_1,g_2,g_3)=(10,10,10)$, we have three solution vectors $\mathbf{x} = (1,1,1)$, $(2,0,1)$, and $(0,2,1)$. This gives us $|W(\mathbf{s}|Z_f,\mathcal{A})|=\binom{10}{1}^7 + \binom{10}{1}^3\binom{10}{2}^2+\binom{10}{1}^3\binom{10}{2}^2$ and $X \approx 85.32$, a marked improvement even though both strategies involved splitting up the $70$ coins into $7$ smaller piles of size $10$.

\section{A New Variety of the Original Counterfeit Coin Problem}

Motivated by the results found in this work, we can describe a new variation of the original counterfeit coin problem:

\begin{quote}
Given a set of $n$ coins with some number $0 \le k \le n$ fake coins, use a balance scale to determine whether or not $k$ is a multiple of positive integer $m>1$.
\end{quote}

The following are a definition and theorem proposed by Purdy in \cite{purdy1}:

\begin{definition}
Let $f: \{0,1 \}^n \rightarrow \mathbb{R}$ be a function of $n$ Boolean variables, and let $x \in \{0,1 \}^n $. For $1 \le i \le n$, let $x^i$ be $x$ with its $i$-th coordinate flipped. The \textit{sensitivity of $f$ at $x$}, $\sigma_x(f)$, is the number of $i$ such that $f(x^i) \neq f(x)$. The \textit{average sensitivity of $f$}, $\alpha(f)$, is $E_x[\sigma_x(f)]$, where  $x \in \{0,1 \}^n $ is chosen uniformly at random.
\end{definition}

\begin{theorem}\label{thm:6}
Let $f: \{0,1\}^n \rightarrow \mathbb{R}$ be a function of $n$ Boolean variables. We can identify any sets of coins $C$ with a $0/1$ vector $x$ by numbering the coins and letting $x_i=1$ iff the $i$-th coin is good. We can then try to determine $f(x)$ by applying measurements to $C$. Any oblivious coin-weighing algorithm for determining $f$ in this way must use $\Omega(\alpha(f)/\sqrt{n})$ measurements, where $\alpha(f)$ is the average sensitivity of $f$.
\end{theorem}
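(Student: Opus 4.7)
The plan is to reduce the sensitivity of $f$ to a sum of per-measurement sensitivities, and then bound each of these by $O(\sqrt{n})$ on average via a Littlewood-Offord type anticoncentration inequality. An oblivious algorithm of length $m$ computes $f(\mathbf{x})$ as some function $g$ of the syndrome $(s(\mathbf{x},\mathbf{h}^1),\dots,s(\mathbf{x},\mathbf{h}^m))$. For any $i$, if $f(\mathbf{x}^i)\neq f(\mathbf{x})$ then at least one coordinate of the syndrome must flip, so
$$ \sigma_{\mathbf{x}}(f) \;\le\; \sum_{j=1}^{m} \bigl|\{\,i : \operatorname{sign}(\mathbf{x}^i\cdot\mathbf{h}^j)\neq \operatorname{sign}(\mathbf{x}\cdot\mathbf{h}^j)\,\}\bigr|. $$
Taking expectations over uniformly random $\mathbf{x}\in\{0,1\}^n$ reduces the task to controlling each term on the right.

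Next I would observe that flipping coordinate $i$ shifts $\mathbf{x}\cdot\mathbf{h}^j$ by $\pm h^j_i\in\{-1,0,1\}$, so the sign can change only when $|\mathbf{x}\cdot\mathbf{h}^j|\le 1$, and in that event the flipping coordinate must lie in the support of $\mathbf{h}^j$. Writing $s_j=|\operatorname{supp}(\mathbf{h}^j)|\le n$, this gives the pointwise bound
$$ \bigl|\{\,i : \operatorname{sign}(\mathbf{x}^i\cdot\mathbf{h}^j)\neq \operatorname{sign}(\mathbf{x}\cdot\mathbf{h}^j)\,\}\bigr| \;\le\; s_j \cdot \mathbf{1}\!\left[\,|\mathbf{x}\cdot\mathbf{h}^j|\le 1\,\right]. $$
The key analytic input is the Erd\H{o}s--Littlewood--Offord inequality, which for any nonzero $\mathbf{h}\in\{-1,0,1\}^n$ and any integer $c$ yields $\Pr_{\mathbf{x}}[\mathbf{x}\cdot\mathbf{h}=c]=O(1/\sqrt{s_j})$. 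Summing over $c\in\{-1,0,1\}$ and multiplying by $s_j$ gives
$$ \mathbb{E}_{\mathbf{x}}\!\left[\bigl|\{\,i : \operatorname{sign}(\mathbf{x}^i\cdot\mathbf{h}^j)\neq \operatorname{sign}(\mathbf{x}\cdot\mathbf{h}^j)\,\}\bigr|\right] \;=\; O\!\left(\sqrt{s_j}\,\right) \;\le\; O\!\left(\sqrt{n}\,\right). $$

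Summing this over $1\le j\le m$ and comparing to $\alpha(f)=\mathbb{E}_{\mathbf{x}}[\sigma_{\mathbf{x}}(f)]$ gives $\alpha(f)\le m\cdot O(\sqrt{n})$, i.e.\ $m=\Omega(\alpha(f)/\sqrt{n})$, as claimed. The main obstacle is invoking the correct form of the Littlewood--Offord inequality: the classical statement is for signed sums of $\pm 1$ Rademacher variables, so I would first apply the affine change of variables $x_i=(1+\varepsilon_i)/2$ to rewrite $\mathbf{x}\cdot\mathbf{h}$ as a signed sum $\tfrac{1}{2}\sum_i\varepsilon_i h_i + \tfrac{1}{2}\sum_i h_i$ with $\varepsilon_i\in\{-1,+1\}$ uniform, which reduces the point-mass bound on $\mathbf{x}\cdot\mathbf{h}$ to the Rademacher case with $s_j$ nonzero coefficients. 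The remaining edge cases (measurements with very small support, or constant $f$, in which case $\alpha(f)=0$ and the bound is vacuous) fall through without incident.
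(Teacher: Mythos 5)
The paper does not prove this theorem at all --- it is imported verbatim from Purdy \cite{purdy1} and used as a black box, so there is no in-paper argument to compare against. Your reconstruction is correct and is essentially Purdy's own proof: the two ingredients --- subadditivity of sensitivity over the syndrome coordinates ($\sigma_{\mathbf{x}}(f)\le\sum_j\sigma_{\mathbf{x}}(s(\cdot,\mathbf{h}^j))$ for an oblivious strategy), and the $O(\sqrt{s_j})$ bound on the average sensitivity of a single weighing via the observation that the sign of $\mathbf{x}\cdot\mathbf{h}^j$ can only flip when $|\mathbf{x}\cdot\mathbf{h}^j|\le 1$ combined with Erd\H{o}s--Littlewood--Offord anticoncentration --- are exactly the ones in the cited source. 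Your handling of the $\{0,1\}\to\{-1,+1\}$ change of variables and of the degenerate cases (small support, constant $f$) is also fine, so I see no gap.
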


Using the above definition of average sensitivity with Purdy's main result, along with the convention that $0$ indicates a fake coin and $1$ indicates a good (\textit{i.e.} real) coin, we can attempt to find the minimum number of measurements necessary to solve our new counterfeit coin problem. First, we define an appropriate measurement function $\text{MOD}_m^{*}$:

\begin{definition}
The function $\text{MOD}_m: \{0,1 \}^n \rightarrow \{0,1 \}$ is a function of $n$ Boolean variables that outputs 1 if the number of input 1s is a multiple of $m$, and 0 otherwise. We define $\text{MOD}_m^{*}$ as a Boolean function that outputs 1 if the number of input 0s is a multiple of $m$, and 0 otherwise.
\end{definition}

If we let the function $\text{MOD}_m^{*}$ to be our measurement function for the given coin weighing problem, it is easy to see that a result of $\text{MOD}_m^{*}=1$ implies that $k$ is a multiple of $m$, and $\text{MOD}_m^{*}=0$ implies the contrary. If we can find the average sensitivity of $\text{MOD}_m^{*}$, Theorem~\ref{thm:6} can be applied to show the minimum number of weighings necessary to solve the problem.

\begin{theorem}\label{thm:7}
The Boolean function $\text{MOD}_m^{*}: \{0,1 \}^n \rightarrow \{0,1 \}$ has average sensitivity $\dfrac{2n}{m}+ o(1)$.
\end{theorem}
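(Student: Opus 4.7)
The plan is to compute the sensitivity $\sigma_x(\text{MOD}_m^{*})$ pointwise based only on the number of zeros in $x$, then average using a roots-of-unity filter to pick out the dominant $2^n/m$ behavior of the relevant binomial-coefficient partial sums.

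First I would observe that if $k$ denotes the number of zeros in $x$, then $\text{MOD}_m^{*}(x)$ depends only on $k \bmod m$. Flipping a $0$-coordinate shifts $k \mapsto k-1$ and flipping a $1$-coordinate shifts $k \mapsto k+1$, so for $m \ge 3$ the pointwise sensitivity is $\sigma_x = n$ when $k \equiv 0 \pmod m$, $\sigma_x = k$ when $k \equiv 1 \pmod m$, $\sigma_x = n-k$ when $k \equiv -1 \pmod m$, and $\sigma_x = 0$ otherwise; the $m=2$ case is subsumed because the two nonzero residues collapse and the formula below still gives the correct constant value $n$. Using the identities $k\binom{n}{k} = n\binom{n-1}{k-1}$ and $(n-k)\binom{n}{k} = n\binom{n-1}{k}$ and writing $S_j(N) = \sum_{k \equiv j \,(\bmod\, m)}\binom{N}{k}$, averaging over the $2^n$ inputs yields
\begin{equation*}
\alpha(\text{MOD}_m^{*}) = \frac{n}{2^n}\bigl[\,S_0(n) + S_0(n-1) + S_{m-1}(n-1)\,\bigr].
\end{equation*}

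Next I would apply the standard roots-of-unity filter with $\omega = e^{2\pi i /m}$,
\begin{equation*}
S_j(N) = \frac{1}{m}\sum_{r=0}^{m-1} \omega^{-rj}\,(1+\omega^r)^N.
\end{equation*}
The $r=0$ term contributes $2^N/m$, and for $r \neq 0$ we have $|1+\omega^r| = 2|\cos(\pi r/m)| \le 2\cos(\pi/m) < 2$, so $S_j(N) = 2^N/m + O\bigl((2\cos(\pi/m))^N\bigr)$ uniformly in $j$. Plugging this into the formula above gives
\begin{equation*}
\alpha(\text{MOD}_m^{*}) = n\!\left[\tfrac{1}{m} + \tfrac{1}{m}\right] + n\cdot O\!\bigl((\cos(\pi/m))^{\,n}\bigr) = \frac{2n}{m} + o(1),
\end{equation*}
since for any fixed $m \ge 2$ we have $\cos(\pi/m) < 1$ and exponential decay beats linear growth.

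The main obstacle is bookkeeping rather than ideas: the case $m=2$ is delicate because the residues $1$ and $-1$ coincide, so one must check that the single formula for $\alpha$ still counts the sensitivity at inputs with $k \equiv 1$ correctly (it does, because $k + (n-k) = n$ is exactly the true sensitivity in that degenerate setting). A second minor subtlety is making sure the error bound is stated uniformly in the index $j$ so that combining the three $S_j$ terms produces a single $o(1)$ error after multiplication by $n$; this is immediate from the fact that $|\cos(\pi r/m)| \le \cos(\pi/m)$ for $1 \le r \le m-1$, independent of $j$.
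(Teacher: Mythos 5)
Your proof is correct and follows essentially the same route as the paper: the identical case analysis on the residue class of the number of zeros, followed by a multisection of the binomial sums (your roots-of-unity filter is just the complex-exponential form of the cosine identity the paper quotes) and the same exponential-decay bound on the non-principal terms. The only cosmetic difference is that you keep the three sums $S_0(n)+S_0(n-1)+S_{m-1}(n-1)$ via the absorption identities where the paper collapses everything to $\frac{n}{2^{n-1}}\sum_s\binom{n}{sm}$; the two expressions agree by Pascal's rule.
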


\begin{proof}
According to the definition of average sensitivity, we have 
$$ \alpha(\text{MOD}_m^{*})= E_x[\sigma_x(\text{MOD}_m^{*})] $$
\begin{equation}\label{eq:11}
 = \frac{1}{2^n} \sum_{x \in \{0,1 \}^n} \sigma_x(\text{MOD}_m^{*}).
\end{equation}

The above sum can be split up into four separate cases for all binary strings of length $n$:

\begin{enumerate}
    \item The number of $0$'s is $sm$ for some $s$. The number of such strings is $\binom{n}{sm}$. In this case, flipping any of the bits produces a change in the output of $\text{MOD}_m^{*}$, so the sensitivity is $n$.
    \item The number of $0$'s is $sm-1$ for some $s$. The number of such strings is $\binom{n}{sm-1}$. In this case, flipping any of the $(n-sm+1)$ $1$'s produces a change in the output of $\text{MOD}_m^{*}$, so the sensitivity is $n-sm+1$.
    \item The number of $0$'s is $sm+1$ for some $s$. The number of such strings is $\binom{n}{sm+1}$. In this case, flipping any of the $(sm+1)$ $0$'s produces a change in the output of $\text{MOD}_m^{*}$, so the sensitivity is $sm+1$.
    \item In all other strings, changing any single bit produces no change in the output of $\text{MOD}_m^{*}$, so the sensitivity is $0$.
\end{enumerate}

As a result, our expression in (\ref{eq:11}) becomes

$$ \frac{1}{2^n} \sum_{s=0}^{\lfloor n/m \rfloor} \binom{n}{sm-1}(n-sm+1) + \binom{n}{sm}n + \binom{n}{sm+1}(sm+1) + 0 $$  
$$ = \frac{n}{2^{n-1}} \sum_{s=0}^{\lfloor n/m \rfloor} \binom{n}{sm} $$
\begin{equation}\label{eq:12}
= \frac{n}{2^{n-1}} \sum_{s=0}^{\infty} \binom{n}{sm}.
\end{equation}

As shown in \cite{sms}, series multisection results in the well-known identity

\begin{equation}\label{eq:13}
\sum_{s=0}^\infty \binom{n}{p+sm} = \frac{1}{m} \sum_{j=0}^{m-1} \cos \bigg[ \dfrac{\pi (n-2p)j}{m} \bigg] 2^n \cos^n \bigg (\dfrac{\pi j}{m} \bigg).
\end{equation}

By substitution, equation (\ref{eq:12}) can be rewritten as

$$ \frac{n}{2^{n-1}} \sum_{s=0}^{\infty} \binom{n}{sm} = \frac{n}{2^{n-1}} \Bigg[ \frac{1}{m} \sum_{j=0}^{m-1} \cos \bigg( \dfrac{\pi nj}{m} \bigg) 2^n \cos^n \bigg (\dfrac{\pi j}{m} \bigg) \Bigg] $$

$$ = \frac{2n}{m} \Bigg[\sum_{j=0}^{m-1} \cos \bigg( \dfrac{\pi nj}{m} \bigg) \cos^n \bigg (\dfrac{\pi j}{m} \bigg) \Bigg] $$

\begin{equation}\label{eq:13.5}
= \frac{2n}{m} \Bigg[1 + \sum_{j=1}^{m-1} \cos \bigg( \dfrac{\pi nj}{m} \bigg) \cos^n \bigg (\dfrac{\pi j}{m} \bigg) \Bigg].
\end{equation}

By analysis of each term in the sum of the above expression, it is clear that 

$$ \sum_{j=1}^{m-1} \cos \bigg( \dfrac{\pi nj}{m} \bigg) \cos^n \bigg (\dfrac{\pi j}{m} \bigg) \le \sum_{j=1}^{m-1} \cos^n \bigg (\dfrac{\pi j}{m} \bigg) \le  \sum_{j=1}^{m-1} \cos^n \bigg (\dfrac{\pi}{m} \bigg) < m(1-\epsilon)^n, $$

where $\epsilon$ is a positive constant such that $\epsilon < 1 - \cos (\frac{\pi}{m})$. This in turn implies that our expression in (\ref{eq:13.5}) can be written as 

$$ \frac{2n}{m} [1 + o(m(1-\epsilon)^n)] $$

$$ = \frac{2n}{m} + o(n(1-\epsilon)^n) $$

\begin{equation}\label{eq:14}
= \frac{2n}{m} + o(1),
\end{equation}

as desired.

\end{proof}

It is easy to make sense of this result. Suppose $n \gg m$. In this case, there is approximately a $1/m$ chance that the number of fake coins is a multiple of $m$; in this case, flipping the identity of any of the coins changes the output of $\text{MOD}_m^{*}$. In addition, there’s about a $1/m$ chance that the number of fake coins is one less than a multiple of $m$, and a $1/m$ chance that this number is one more than a multiple of $m$; in both cases, flipping the identity of one of the coins changes the output around 50\% of the time. All in all, you get the probability of a changed outcome as $(1/m)(1) + (1/m)(1/2) + (1/m)(1/2) = 2/m$. Multiplying this value by $n$, the total number of coins, gives us $2n/m$ as expected.

Combining Theorems \ref{thm:6} and \ref{thm:7} and substituting $\text{MOD}_m^{*}$ for $f$, we may state the following:

\begin{theorem}
Given a set of $n$ total coins with $0 \le k \le n$ fake, at least $\Omega(\frac{\sqrt{n}}{m}+ o(1))$ measurements are required for an oblivious coin-weighing algorithm to determine if $k$ is a multiple of $m$.
\end{theorem}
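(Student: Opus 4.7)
The proof is a direct corollary: I plan to apply Theorem~\ref{thm:6} with $f = \text{MOD}_m^{*}$ and then substitute the value of $\alpha(\text{MOD}_m^{*})$ computed in Theorem~\ref{thm:7}.

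First I would verify that the coin-weighing problem in question is exactly the problem of evaluating $\text{MOD}_m^{*}$ under the encoding established earlier. Representing the set of $n$ coins as a Boolean vector $x \in \{0,1\}^n$ with $x_i = 1$ iff the $i$-th coin is genuine, the number of fake coins $k$ is precisely the number of $0$s in $x$. By the definition of $\text{MOD}_m^{*}$, we then have $\text{MOD}_m^{*}(x) = 1$ exactly when $k$ is a multiple of $m$, so any oblivious weighing algorithm that decides whether $k$ is a multiple of $m$ is an oblivious algorithm computing $\text{MOD}_m^{*}$, and conversely.

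Next I would invoke Theorem~\ref{thm:6} with $f = \text{MOD}_m^{*}$, which immediately yields that any oblivious weighing algorithm for this task must use $\Omega(\alpha(\text{MOD}_m^{*})/\sqrt{n})$ measurements. Plugging in the value $\alpha(\text{MOD}_m^{*}) = \frac{2n}{m} + o(1)$ from Theorem~\ref{thm:7} gives
$$\Omega\!\left( \frac{\frac{2n}{m} + o(1)}{\sqrt{n}} \right) = \Omega\!\left( \frac{\sqrt{n}}{m} + o(1) \right),$$
which is the claimed bound.

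There is no substantive obstacle here: the theorem is essentially the composition of Purdy's general lower bound (Theorem~\ref{thm:6}) with the sensitivity calculation (Theorem~\ref{thm:7}), both of which are already in hand. The only thing to do in a full write-up is to make the correspondence between the coin-weighing problem and the function $\text{MOD}_m^{*}$ explicit, cite both theorems, and carry out the one-line substitution shown above. The minor bookkeeping point worth flagging is that the $o(1)$ term inside the $\Omega$ is absorbed by the dominant $\sqrt{n}/m$ for large $n$, so the stated form $\Omega(\sqrt{n}/m + o(1))$ is simply a faithful transcription of the substitution rather than a sharper claim.
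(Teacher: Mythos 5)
Your proposal is correct and is exactly the paper's argument: the paper presents this theorem as an immediate consequence of combining Purdy's lower bound (Theorem~\ref{thm:6}) with the average-sensitivity computation for $\text{MOD}_m^{*}$ (Theorem~\ref{thm:7}), precisely the substitution you carry out. No discrepancy to report.
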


\section{Conclusions and Future Work}

We have made several advances in the analysis of this coin weighing problem, but many questions still remain unanswered: do efficient methods of maximizing the results given by specific strategies, such as equation (\ref{eq:10}), exist? Can we construct an optimal weighing strategy $\mathcal{A}$ for any initial set of parameters, and if so, how? Furthermore, this paper primarily examines strategies that utilize balanced weighings, and the generalization of weighing procedures that use unbalanced weighings, such as Strategy~\ref{str:1} and others shown in \cite{Knop, NDTK}, has not even been addressed. Additionally, this problem would benefit greatly from a formal treatment with information theory; for instance, there seems to be a very strong correlation between $\log_3(X)$ and the number of weighings required to carry out a WS $\mathcal{A}$, but no formal relation has yet been discerned. By formalizing the definition of this relatively new problem and generalizing a number of solutions, we have laid the groundwork for future research that can tackle these issues. A more thorough analysis of this problem may prove to be useful in fields such as cryptology and information science.

\section{Acknowledgments}

I am grateful to the MIT-PRIMES program for allowing me to conduct this research, and to Tanya Khovanova for introducing me to this problem and being my mentor. Rafael M. Saavedra's useful advice and suggestions are also highly appreciated.

\end{document}